\titleformat{\section}{\normalsize\bfseries}{\thesection}{1em}{}
\titleformat{\subsection}{\normalsize\bfseries}{\thesubsection}{1em}{}
\numberwithin{equation}{subsection}
\newtheorem{defn}{Definition}[section]
\newtheorem{para}[defn]{}
\newtheorem{prop}[defn]{Proposition}
\newtheorem{theo}[defn]{Theorem}
\newtheorem{lem}[defn]{Lemma}
\newtheorem{cor}[defn]{Corollary}
\newtheorem{rmk}[defn]{Remark}
\newtheorem{egg}[defn]{Example}
\newtheorem{condition}[defn]{Condition}
\newtheorem{conjecture}[defn]{Conjecture}
\newtheorem*{claim*}{Claim}
\newtheorem*{just*}{Justification}
\newtheorem*{assumption*}{Assumption}
\newtheorem*{lem*}{Lemma}
\newtheorem*{prop*}{Proposition}
\newtheorem*{thm*}{Theorem}
\newcommand{\V}{\mathscr{V}}
\newcommand{\tensor}{\otimes}
\newcommand{\Cat}{\text{-}\mathsf{Cat}}
\newcommand{\underJ}{\kern -0.5ex \mathscr{J}}
\newcommand{\Set}{\mathsf{Set}}
\newcommand{\T}{\mathbb{T}}
\newcommand{\C}{\mathscr{C}}
\newcommand{\B}{\mathscr{B}}
\newcommand{\A}{\mathscr{A}}
\newcommand{\Mod}{\text{-}\mathsf{Mod}}
\newcommand{\Xbar}{\overline{X}}
\newcommand{\E}{\mathsf{E}}
\newcommand{\calE}{\mathcal{E}}
\newcommand{\Var}{\mathsf{Var}}
\newcommand{\Str}{\mathsf{Str}}
\newcommand{\R}{\mathcal{R}}
\newcommand{\PMet}{\mathsf{PMet}}
\newcommand{\Met}{\mathsf{Met}}
\newcommand{\sfk}{\mathsf{k}}
\begin{document}

\title{\Large \textbf{Extensivity of categories of relational structures}}
\author{Jason Parker
\medskip \\
\small Brandon University, Brandon, Manitoba, Canada}
\date{}

\maketitle

\begin{abstract}
We prove that the category of models of any relational Horn theory satisfying a mild syntactic condition is infinitely extensive. Central examples of such categories include the categories of preordered sets and partially ordered sets, and the categories of small $\V$-categories, (symmetric) pseudo-$\V$-metric spaces, and (symmetric) $\V$-metric spaces for a commutative unital quantale $\V$. We also explicitly characterize initial sources and final sinks in such categories, and in particular embeddings and quotients. 
\end{abstract}

\section{Introduction}

A category $\C$ is \emph{infinitely extensive} \cite{Carboniextensive} if it has small coproducts and for any small family $(X_i)_{i \in I}$ of objects of $\C$, the canonical functor $\prod_{i} \C/X_i \to \C/\left(\coprod_i X_i\right)$ is an equivalence. Prominent and well-known examples of infinitely extensive categories include cocomplete elementary toposes (such as Grothendieck toposes), the category $\mathsf{Cat}$ of small categories and functors, and the category $\mathsf{Top}$ of topological spaces and continuous maps. Generalizing the last example, Mahmoudi-Schubert-Tholen showed in \cite{Universalitycoproducts} that many of the categories studied in \emph{monoidal topology} \cite{Monoidaltop} (which are defined in terms of commutative unital quantales $\V$) are also infinitely extensive. Clementino \cite{Clementinoextensive} recently extended the results of \cite{Universalitycoproducts} to a more general setting (where a commutative unital quantale $\V$ is replaced by a complete and cocomplete symmetric monoidal closed category $\V$). 

As a further contribution to this line of work, we show in this article that if $\T$ is a \emph{relational Horn theory} satisfying a mild syntactic condition \eqref{variable_para}, then the category $\T\Mod$ of $\T$-models and their morphisms is infinitely extensive. As we show in \cref{without_equality_examples}, key examples of such categories include: the category $\mathsf{Preord}$ of preordered sets and monotone maps, and its full subcategory $\mathsf{Pos}$ of partially ordered sets; for a commutative unital quantale $\V$, the category $\V\Cat$ of (small) $\V$-categories and $\V$-functors, the category $\PMet_\V$ of \emph{(symmetric) pseudo-$\V$-metric spaces} and \emph{$\V$-contractions}, and its full subcategory $\Met_\V$ of \emph{(symmetric) $\V$-metric spaces}. The infinite extensivity of some of these categories (e.g.~$\mathsf{Preord}$ and $\V\Cat$) is already known from the results of \cite{Universalitycoproducts, Clementinoextensive}; but the categories studied in \cite{Universalitycoproducts, Clementinoextensive} do not subsume all of the categories studied in the present article.\footnote{For instance, the categories studied in \cite{Universalitycoproducts, Clementinoextensive} are all \emph{topological} over $\Set$, while not all categories studied in the present article (e.g.~$ \mathsf{Pos}$) have this property. Moreover, it does not seem that the categories $\PMet_\V$ and $\Met_\V$ of \emph{symmetric} (pseudo-)$\V$-metric spaces are examples that are (directly) captured by \cite{Universalitycoproducts, Clementinoextensive}.}  On the other hand, the categories considered herein are all \emph{locally presentable} \cite{LPAC}, and there are certainly examples of non-locally-presentable categories studied in \cite{Universalitycoproducts, Clementinoextensive} (e.g.~$\mathsf{Top}$) whose infinite extensivity is therefore not a consequence of the results of the present article. In summary, the results of the present article neither subsume nor are subsumed by the results of \cite{Universalitycoproducts, Clementinoextensive}.

We now provide an outline of the article. After recalling some relevant background on concrete and topological categories in \S\ref{background}, we begin \S\ref{first_section} by defining the notion of a \emph{relational signature} $\Pi$, which is a set of \emph{relation symbols} equipped with an assignment to each relation symbol of a finite positive arity. We then define the concrete category $\Str(\Pi)$ of \emph{$\Pi$-structures} and \emph{$\Pi$-morphisms}. For a regular cardinal $\lambda$, we define the notion of a \emph{$\lambda$-ary relational Horn theory} $\T$ over a relational signature $\Pi$, and we provide some central examples of such theories. In \S\ref{second_section} we study some topological properties of the category $\T\Mod$ for a relational Horn theory $\T$ \emph{without equality}, and we provide an explicit characterization of initial sources (and hence embeddings) and final sinks (and hence quotients) in $\T\Mod$. We prove the main result of the paper in \S\ref{extensivity_section}. We begin \S\ref{extensivity_section} by stating the mild syntactic condition \eqref{variable_para} that we will impose on relational Horn theories to prove the infinite extensivity of their categories of models; this condition is satisfied by all of our examples, and many others. We then establish in \cref{extensive_thm} that if $\T$ is a relational Horn theory that satisfies the syntactic condition \eqref{variable_para}, then $\T\Mod$ is infinitely extensive. We thank the referee for useful comments and suggestions that improved the content and presentation of the paper.    

\section{Notation and background}
\label{background}

We first recall some background material on concrete and topological categories, which can be found (e.g.) in \cite{AHS}. 

\begin{para}
\label{concrete_cat}
{\em
A \emph{concrete category (over $\Set$)} is simply a category $\C$ equipped with a faithful functor $|-| : \C \to \Set$ (which we will usually not mention explicitly). Concrete categories $\left(\A, |-|_\A\right)$ and $\left(\B, |-|_\B\right)$ are \emph{concretely isomorphic} if there is an isomorphism of categories $F : \A \to \B$ satisfying $|-|_\B \circ F = |-|_\A$. A \emph{source} in an arbitrary category $\C$ is a (possibly large) class of morphisms $(h_i : X \to X_i)_{i \in I}$ in $\C$ with the same domain, while a \emph{sink} in $\C$ is a (possibly large) class of morphisms $(h_i : X_i \to X)_{i \in I}$ with the same codomain. 

A source $(h_i : X \to X_i)_{i \in I}$ in a concrete category $\C$ is \emph{initial} if for any $\C$-object $Y$ and function $h : |Y| \to |X|$, the function $h$ lifts to a $\C$-morphism $h : Y \to X$ iff the composite functions $h_i \circ h : |Y| \to |X_i|$ lift to $\C$-morphisms $h_i \circ h : Y \to X_i$ for all $i \in I$. In particular, a morphism of $\C$ is \emph{initial} if the source consisting of just that morphism is initial. Dually, a sink $(h_i : X_i \to X)_{i \in I}$ in a concrete category $\C$ is \emph{final} if for any $\C$-object $Y$ and function $h : |X| \to |Y|$, the function $h$ lifts to a $\C$-morphism $h : X \to Y$ iff the composite functions $h \circ h_i : |X_i| \to |Y|$ lift to $\C$-morphisms $h \circ h_i : X_i \to Y$ for all $i \in I$. In particular, a morphism of $\C$ is \emph{final} if the sink consisting of just that morphism is final.

If $\C$ is a concrete category, then a \emph{structured source} is a (possibly large) class of functions $\left(f_i : S \to |X_i|\right)_{i \in I}$ with $S$ a set and $X_i$ a $\C$-object for each $i \in I$, while a \emph{structured sink} is defined dually. A concrete category $\C$ is \emph{topological (over $\Set$)} if every structured source $\left(f_i : S \to |X_i|\right)_{i \in I}$ has an initial lift, meaning that there is a $\C$-object $X$ with $|X| = S$ such that each $f_i : |X| = S \to |X_i|$ lifts to a $\C$-morphism $f_i : X \to X_i$, and the resulting source $(f_i : X \to X_i)_{i \in I}$ is initial. A topological category over $\Set$ also satisfies the dual property that every structured sink has a final lift (see e.g. \cite[21.9]{AHS}).
}
\end{para}

\begin{para}
\label{fact_system}
{\em
Let $\C$ be a topological category over $\Set$. We say that a morphism of $\C$ is \emph{injective} (resp. \emph{surjective, bijective}) if its underlying function is injective (resp. surjective, bijective). Since $|-| : \C \to \Set$ is faithful and preserves small limits and colimits (see \cite[21.15]{AHS}), it follows that the monomorphisms (resp. epimorphisms) of $\C$ are precisely the injective (resp. surjective) morphisms. 

An \emph{embedding} of $\C$ is an injective morphism that is initial, while a \emph{quotient (morphism)} of $\C$ is a surjective morphism that is final. The embeddings (resp. quotients) are precisely the strong monomorphisms (resp. strong epimorphisms) of $\C$, which in turn are precisely the regular monomorphisms (resp. regular epimorphisms) of $\C$ (see e.g. \cite[21.13]{AHS}). It follows that the isomorphisms of $\C$ are precisely the bijective embeddings, or equivalently the bijective quotients.    
}
\end{para} 

\section{Relational Horn theories}
\label{first_section}

We begin by defining the notion of a \emph{relational signature} and its structures. 

\begin{defn}
\label{relational_sig}
{\em
A \textbf{relational signature} is a set $\Pi$ of \textbf{relation symbols} together with an assignment to each relation symbol of a finite arity, i.e.~a positive integer $n \geq 1$.
}
\end{defn}

\noindent We will typically write $R$ for a general relation symbol. We fix a relational signature $\Pi$ for the remainder of \S\ref{first_section}. The next two definitions are essentially taken from \cite[Definition 3.1]{Monadsrelational}.

\begin{defn}
\label{edge}
{\em
A \textbf{$\Pi$-edge} in a set $S$ is a pair $(R, (s_1, \ldots, s_n))$ consisting of a relation symbol $R \in \Pi$ (of arity $n \geq 1$) and an ordered $n$-tuple $(s_1, \ldots, s_n)$ of elements of $S$. A \textbf{$\Pi$-structure} $X$ consists of a set $|X|$ together with a subset $R^X \subseteq |X|^n$ for each relation symbol $R \in \Pi$ (of arity $n \geq 1$). We may equivalently describe a $\Pi$-structure $X$ as a set $|X|$ equipped with a set $\E(X)$ of $\Pi$-edges in $|X|$: if $R \in \Pi$ of arity $n \geq 1$, then $(x_1, \ldots, x_n) \in R^X$ iff $\E(X)$ contains the $\Pi$-edge $(R, (x_1, \ldots, x_n))$. We will pass between these equivalent descriptions of $\Pi$-structures without further comment. We will often write $X \models Rx_1\ldots x_n$ to mean that $(x_1, \ldots, x_n) \in R^X$.
}
\end{defn}   

\begin{defn}
\label{Pi_morphism}
{\em
Let $h : S \to S'$ be a function from a set $S$ to a set $S'$, and let $e = (R, (s_1, \ldots, s_n))$ be a $\Pi$-edge in $S$. We write $h \cdot e = h \cdot (R, (s_1, \ldots, s_n))$ for the $\Pi$-edge $(R, (h(s_1), \ldots, h(s_n)))$ in $S'$. If $E$ is a set of $\Pi$-edges in $S$, then we write $h \cdot E$ for the set of $\Pi$-edges $\{h \cdot e \mid e \in S\}$ in $S'$. If $E'$ is a set of $\Pi$-edges in $S'$, then we write $h^{-1}[E']$ for the set of $\Pi$-edges $e$ in $S$ such that $h \cdot e \in E'$.
 
Given $\Pi$-structures $X$ and $Y$, a \textbf{($\Pi$-)morphism $h : X \to Y$} is a function $h : |X| \to |Y|$ such that $h \cdot \E(X) \subseteq \E(Y)$, or equivalently such that $\E(X) \subseteq h^{-1}[\E(Y)]$. We let $\Str(\Pi)$ be the concrete category of $\Pi$-structures and $\Pi$-morphisms.    
}
\end{defn} 

\noindent We now describe the syntax of relational Horn theories.  

\begin{defn}
\label{Horn_formula}
{\em
Let $\lambda$ be a regular cardinal, and let $\Var$ be a set of variables of cardinality $\lambda$. A \textbf{$\lambda$-ary\footnote{When $\lambda = \aleph_0$, we will say ``finitary'' rather than ``$\aleph_0$-ary''.} Horn formula (over $\Pi$)} is an expression of the form $\Phi \Longrightarrow \psi$, where $\Phi$ is a set of $\Pi$-edges in $\Var$ of cardinality $< \lambda$ and $\psi$ is a $\left(\Pi \cup \{=\}\right)$-edge in $\Var$, where $=$ is a fresh binary relation symbol not in $\Pi$. If $\Phi = \{\varphi_1, \ldots, \varphi_n\}$ is finite, then we write $\varphi_1, \ldots, \varphi_n \Longrightarrow \psi$, and if $\Phi = \varnothing$, then we write $\Longrightarrow \psi$. A \textbf{$\lambda$-ary Horn formula without equality (over $\Pi$)} is a $\lambda$-ary Horn formula $\Phi \Longrightarrow \psi$ (over $\Pi$) such that $\psi$ is a $\Pi$-edge in $\Var$, i.e.~such that $\psi$ does not contain the fresh binary relation symbol $=$.    
}
\end{defn}

\begin{defn}
\label{Horn_theory}
{\em
Let $\lambda$ be a regular cardinal. A \textbf{$\lambda$-ary relational Horn theory $\T$ (without equality)} is a set of $\lambda$-ary Horn formulas (without equality) over $\Pi$, which we call the \emph{axioms} of $\T$. A \textbf{relational Horn theory (without equality)} is a $\lambda$-ary relational Horn theory (without equality) for some regular cardinal $\lambda$.   
}
\end{defn}

\begin{defn}
\label{formula_satis}
{\em
Let $\lambda$ be a regular cardinal, and let $X$ be a $\Pi$-structure. We let $\overline{X}$ be the $\left(\Pi \cup \{=\}\right)$-structure defined by $\left|\overline{X}\right| := |X|$ and $\E\left(\overline{X}\right) := \E(X) \cup \{(=, (x, x)) \mid x \in |X|\}$. A \textbf{valuation in $X$} is a function $\kappa : \Var \to |X|$. We say that $X$ \textbf{satisfies} a $\lambda$-ary Horn formula $\Phi \Longrightarrow \psi$ over $\Pi$ if whenever $\kappa$ is a valuation in $X$ such that $X \models \kappa \cdot \varphi$ for each $\varphi \in \Phi$, then $\Xbar \models \kappa \cdot \psi$. A $\Pi$-structure $X$ is a \textbf{model} of a $\lambda$-ary relational Horn theory $\T$ if $X$ satisfies every axiom of $\T$. We let $\T\Mod$ be the full subcategory of $\Str(\Pi)$ consisting of the models of $\T$, which is a concrete category when equipped with the faithful functor $|-| : \T\Mod \to \Set$ obtained by restricting the faithful functor $|-| : \Str(\Pi) \to \Set$.          
}
\end{defn}

\begin{egg}
\label{without_equality_examples}
{\em
We provide the following central examples of relational Horn theories. Some further examples may be found in \cite[Example 3.5]{Monadsrelational}.
\begin{enumerate}[leftmargin=*]
\item Let $\T$ be the empty relational Horn theory over $\Pi$. Then of course $\T\Mod = \Str(\Pi)$. In particular, if $\Pi$ is empty, then $\T\Mod = \Set$.\label{empty} 

\item Let $\Pi$ consist of a single binary relation symbol $\leq$, and let $\T$ be the finitary relational Horn theory without equality over $\Pi$ that consists of the two axioms $\Longrightarrow x \leq x$ and $x \leq y, y \leq z \Longrightarrow x \leq z$. Then $\T\Mod$ is the concrete category $\mathsf{Preord}$ of preordered sets and monotone maps. If one extends $\T$ by adding the further axiom $x \leq y, y \leq x \Longrightarrow x = y$, then the category of models of the resulting finitary relational Horn theory with equality is the concrete category $\mathsf{Pos}$ of posets and monotone maps.\label{preord}  

\item The following examples come from \cite[Definition 2.2 and Remark 2.4(2)]{Metagories}. Let $(\V, \leq, \tensor, \sfk)$ be a \emph{commutative unital quantale} \cite{Quantales}, meaning that $(\V, \leq)$ is a complete lattice and $(\V, \tensor, \sfk)$ is a commutative monoid such that $\tensor$ distributes over arbitrary suprema in each variable. A \emph{(small) $\V$-category} $(X, d)$ (see also \cite{Kelly}) is a set $X$ equipped with a function $d : X \times X \to \V$ satisfying the two conditions
\[ d(x, x) \geq \sfk \] 
\[ d(x, z) \geq d(x, y) \tensor d(y, z) \] for all $x, y, z \in X$. A \emph{pseudo-$\V$-metric space} is a $\V$-category $(X, d)$ such that $d : X \times X \to \V$ satisfies the further symmetry condition
\[ d(x, y) = d(y, x) \]
for all $x, y \in X$. Finally, a \emph{$\V$-metric space} is a pseudo-$\V$-metric space $(X, d)$ satisfying the further ``separation'' condition
\[ d(x, y) \geq \sfk \Longrightarrow x = y \] for all $x, y \in X$.

If $(X, d_X)$ and $(Y, d_Y)$ are $\V$-categories, then a \emph{$\V$-functor} or \emph{$\V$-contraction} $h : (X, d_X) \to (Y, d_Y)$ is a function $h : X \to Y$ satisfying $d_X(x, x') \leq d_Y(h(x), h(x'))$ for all $x, x' \in X$. We let $\V\Cat$ be the concrete category of $\V$-categories and $\V$-functors, we let $\PMet_\V$ be the full subcategory of $\V\Cat$ consisting of the pseudo-$\V$-metric spaces, and we let $\Met_\V$ be the full subcategory of $\PMet_\V$ consisting of the $\V$-metric spaces. We regard $\PMet_\V$ and $\Met_\V$ as concrete categories by suitably restricting the faithful functor $|-| : \V\Cat \to \Set$. As indicated in \cite[Example 2.3]{Metagories}, one has the following specific examples of $\V\Cat$, $\PMet_\V$, and $\Met_\V$ for suitable choices of $(\V, \leq, \tensor, \sfk)$ (see \cite[Example 2.1]{Metagories}):
\begin{itemize}[leftmargin=*]
\item For the trivial quantale $\mathds{1}$ with just one element, both $\mathds{1}\Cat$ and $\PMet_{\mathds{1}}$ are equivalent to $\Set$, while $\Met_{\mathds{1}}$ is equivalent to the terminal category. 
\item For the Boolean 2-chain quantale $\mathbf{2}$, we have that $\mathbf{2}\Cat$ is equivalent to the category $\mathsf{Preord}$ of preordered sets and monotone maps, while $\PMet_{\mathbf{2}}$ is equivalent to the category whose objects are sets equipped with an equivalence relation, and whose morphisms are functions preserving the equivalence relations. $\Met_{\mathbf{2}}$ is equivalent to $\Set$.  
\item For the \emph{Lawvere quantale} $\mathbb{R}_+ = ([0, \infty], \geq, +, 0)$ \cite{Lawveremetric} given by the extended real half line with the reverse ordering, we have that $\mathbb{R}_+\Cat$ is the category of \emph{Lawvere metric spaces}, while $\PMet_{\mathbb{R}_+}$ is the category $\PMet$ of extended pseudo-metric spaces (i.e.~pseudo-metric spaces where two points may have distance $\infty$) and contractions (i.e.~non-expanding maps), and $\Met_{\mathbb{R}_+}$ is the category $\Met$ of extended metric spaces (i.e.~extended pseudo-metric spaces $(X, d)$ satisfying $d(x, y) = 0 \Longrightarrow x = y$) and contractions. 
\item For the quantale $\Delta$ of \emph{distance distribution functions} (see e.g. \cite[\S 3.1]{Probmetenriched}), we have that $\Delta\Cat$ is equivalent to the category $\mathsf{ProbMet}$ of \emph{probabilistic metric spaces} (see \cite[\S 3.2]{Probmetenriched} and \cite{Probmetbook}), while $\PMet_\Delta$ is equivalent to the category of \emph{symmetric} probabilistic metric spaces, and $\Met_\Delta$ is equivalent to the category of \emph{symmetric, separated} probabilistic metric spaces.
\end{itemize}        

Let $\Pi_\V$ consist of just the binary relation symbols $\sim_v$ for each $v \in \V$. Let $\T_{\V\Cat}$ be the relational Horn theory without equality over $\Pi_\V$ that consists of the following axioms, where $v, v'$ range over $\V$:
\[ \Longrightarrow x \sim_\sfk x \]
\[ x \sim_v y, y \sim_{v'} z \Longrightarrow x \sim_{v \tensor v'} z \]
\[ x \sim_v y \Longrightarrow x \sim_{v'} y \tag*{($v \geq v'$)} \]
\[ \{x \sim_{v_i} y \mid i \in I\} \Longrightarrow x \sim_{\bigvee_i v_i} y \]
Then $\T_{\V\Cat}\Mod$ is concretely isomorphic \eqref{concrete_cat} to $\V\Cat$, which we prove in the Appendix (\S\ref{appendix}). Let $\T_{\PMet_\V}$ be the relational Horn theory without equality over $\Pi_\V$ that extends $\T_{\V\Cat}$ by adding the symmetry axioms
\[ x \sim_v y \Longrightarrow y \sim_v x \] for all $v \in \V$. Then $\T_{\PMet_\V}\Mod$ is concretely isomorphic to $\PMet_\V$, which we also prove in the Appendix (\S\ref{appendix}). Finally, let $\T_{\Met_\V}$ be the relational Horn theory with equality over $\Pi_\V$ that extends $\T_{\PMet_\V}$ by adding the axiom
\[ x \sim_\sfk y \Longrightarrow x = y. \] Then we also show in the Appendix (\S\ref{appendix}) that $\T_{\Met_\V}\Mod$ is concretely isomorphic to $\Met_\V$.\label{met}
\end{enumerate}
}
\end{egg}

\section{Topological properties of relational Horn theories without equality}
\label{second_section}

Throughout \S\ref{second_section} we fix a relational Horn theory $\T$ \emph{without equality} over a relational signature $\Pi$ (an assumption that we will occasionally repeat for emphasis). We first aim to characterize the initial sources and final sinks in the concrete category $\T\Mod$, which will allow us to prove that $\T\Mod$ is topological over $\Set$, which is in fact a special case of a result \cite[Proposition 5.1]{Rosickyconcrete} by Rosick\'{y}. We will then characterize the embeddings and quotients in $\T\Mod$. We first require the following definition. 

\begin{defn}
\label{deduc_relation}
{\em
Let $S$ be a set. A \textbf{$\T$-relation} on $S$ is a set $\calE$ of $\Pi$-edges in $S$ satisfying the following condition: for any axiom $\Phi \Longrightarrow \psi$ of $\T$ and any valuation $\kappa : \Var \to S$ such that $\kappa \cdot \varphi \in \calE$ for each $\varphi \in \Phi$, we have $\kappa \cdot \psi \in \calE$. The set of all $\Pi$-edges in $S$ is clearly a $\T$-relation on $S$, and the intersection of a small family of $\T$-relations on $S$ is a $\T$-relation on $S$. So for any set $\calE$ of $\Pi$-edges in $S$, we can define the \textbf{$\T$-closure} $\T(\calE)$ of $\calE$ to be the smallest $\T$-relation on $S$ that contains $\calE$, i.e.~the intersection of all $\T$-relations on $S$ that contain $\calE$. Note that a $\Pi$-structure $X$ is a $\T$-model iff $\E(X)$ is a $\T$-relation on $|X|$.
}
\end{defn}

\noindent The proof of the following lemma is elementary.

\begin{lem}
\label{inverse_Trel}
Let $h : S \to S'$ be a function from a set $S$ to a set $S'$, and let $\calE'$ be a $\T$-relation on $S'$. Then $h^{-1}\left[\calE'\right]$ is a $\T$-relation on $S$.
\end{lem}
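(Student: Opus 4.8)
The plan is to unwind the definitions directly, since the statement is a purely syntactic closure property. Recall that $\calE'$ being a $\T$-relation on $S'$ means that for every axiom $\Phi \Longrightarrow \psi$ of $\T$ and every valuation $\kappa' : \Var \to S'$ with $\kappa' \cdot \varphi \in \calE'$ for all $\varphi \in \Phi$, we have $\kappa' \cdot \psi \in \calE'$. I must show that $h^{-1}[\calE']$, which by Definition \ref{Pi_morphism} is the set of $\Pi$-edges $e$ in $S$ with $h \cdot e \in \calE'$, enjoys the same closure property. So I fix an arbitrary axiom $\Phi \Longrightarrow \psi$ of $\T$ and a valuation $\kappa : \Var \to S$ satisfying $\kappa \cdot \varphi \in h^{-1}[\calE']$ for every $\varphi \in \Phi$, and I aim to conclude $\kappa \cdot \psi \in h^{-1}[\calE']$.

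The key step is to transport the hypothesis along $h$ by forming the composite valuation $\kappa' := h \circ \kappa : \Var \to S'$. First I would verify the elementary compatibility identity $h \cdot (\kappa \cdot e) = (h \circ \kappa) \cdot e$ for any $\Pi$-edge $e$ in $\Var$: if $e = (R, (t_1, \ldots, t_n))$ then both sides equal $(R, (h(\kappa(t_1)), \ldots, h(\kappa(t_n))))$, directly from the definition of the action $h \cdot (-)$ in Definition \ref{Pi_morphism}. Using this, the hypothesis $\kappa \cdot \varphi \in h^{-1}[\calE']$ unwinds to $h \cdot (\kappa \cdot \varphi) \in \calE'$, which is precisely $\kappa' \cdot \varphi \in \calE'$, for each $\varphi \in \Phi$. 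Since $\T$ is a theory \emph{without equality} (as fixed throughout \S\ref{second_section}), the conclusion $\psi$ is an ordinary $\Pi$-edge, so the action of $\kappa'$ on $\psi$ is defined just as on the premises and no special handling of the fresh symbol $=$ is needed.

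Now I invoke the hypothesis that $\calE'$ is a $\T$-relation: applying its closure property to the axiom $\Phi \Longrightarrow \psi$ and the valuation $\kappa'$ yields $\kappa' \cdot \psi \in \calE'$. Translating back via the compatibility identity, $\kappa' \cdot \psi = (h \circ \kappa) \cdot \psi = h \cdot (\kappa \cdot \psi)$, so $h \cdot (\kappa \cdot \psi) \in \calE'$, which says exactly that $\kappa \cdot \psi \in h^{-1}[\calE']$, as required. Since the axiom and valuation were arbitrary, $h^{-1}[\calE']$ is a $\T$-relation on $S$, completing the proof.

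There is no real obstacle here; the only point requiring any care is the bookkeeping around the composite valuation and the compatibility identity $h \cdot (\kappa \cdot e) = (h \circ \kappa) \cdot e$, which is why the paper rightly calls the argument elementary. The one assumption that quietly matters is that $\T$ is without equality, so that $\psi$ lives in $\Pi$ and the same edge-action applies uniformly to premises and conclusion.
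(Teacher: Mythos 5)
Your proof is correct and is exactly the elementary argument the paper has in mind (the paper omits it, calling it elementary): compose the valuation with $h$ to get $\kappa' = h \circ \kappa$, use the identity $h \cdot (\kappa \cdot e) = (h \circ \kappa) \cdot e$, and apply the closure property of $\calE'$. Your remark that $\psi$ is a genuine $\Pi$-edge because $\T$ is without equality is a worthwhile point of care, though note the definition of a $\T$-relation applies the same clause regardless, so nothing further hinges on it.
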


\begin{prop}
\label{initial_final_sinks}
A source $(h_i : X \to X_i)_{i \in I}$ in $\T\Mod$ is initial iff for each $R \in \Pi$ of arity $n \geq 1$ and all $x_1, \ldots, x_n \in |X|$, we have $X \models Rx_1 \ldots x_n$ iff $X_i \models Rh_i(x_1)\ldots h_i(x_n)$ for all $i \in I$. A sink $(h_i : X_i \to X)_{i \in I}$ in $\T\Mod$ is final iff $\E(X)$ is the $\T$-closure of $\bigcup_{i \in I} h_i \cdot \E(X_i)$.
\end{prop}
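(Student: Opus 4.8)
The plan is to reformulate both halves as equalities of edge sets and then, in the nontrivial directions, to test initiality and finality against a single well-chosen $\T$-model built on the underlying set $|X|$. Throughout I work with the description of $\Pi$-morphisms from \ref{Pi_morphism}: a function $h$ lifts to a morphism exactly when $h \cdot \E(-) \subseteq \E(-)$, equivalently $\E(-) \subseteq h^{-1}[\E(-)]$.

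First I would record the two automatic inclusions. Since each $h_i$ is a $\Pi$-morphism, we always have $\E(X) \subseteq h_i^{-1}[\E(X_i)]$ (source case) and $h_i \cdot \E(X_i) \subseteq \E(X)$ (sink case). Unwinding the notation $X \models R x_1 \ldots x_n$, the stated source condition says precisely that an edge $(R,(x_1,\ldots,x_n))$ lies in $\E(X)$ iff it lies in $h_i^{-1}[\E(X_i)]$ for every $i$; that is, the condition is the equality $\E(X) = \bigcap_{i} h_i^{-1}[\E(X_i)]$, whose nontrivial content (given the automatic inclusion) is $\bigcap_i h_i^{-1}[\E(X_i)] \subseteq \E(X)$. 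Similarly the stated sink condition is literally $\E(X) = \T\!\left(\bigcup_i h_i \cdot \E(X_i)\right)$, whose nontrivial content is $\E(X) \subseteq \T\!\left(\bigcup_i h_i \cdot \E(X_i)\right)$, the reverse inclusion again being automatic because $\E(X)$ is a $\T$-relation containing $\bigcup_i h_i \cdot \E(X_i)$.

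For sufficiency of each condition, I would take an arbitrary $\T$-model $Y$ and compatible function $h$ and check the lifting equivalence; in both cases the direction ``$h$ a morphism $\Rightarrow$ all composites are morphisms'' is immediate from closure of morphisms under composition. For the source, if each $h_i \circ h$ is a morphism then $\E(Y) \subseteq h^{-1}\!\left[h_i^{-1}[\E(X_i)]\right]$ for all $i$; intersecting over $i$ and applying the assumed equality yields $\E(Y) \subseteq h^{-1}[\E(X)]$, so $h$ lifts. For the sink, if each $h \circ h_i$ is a morphism then $h_i \cdot \E(X_i) \subseteq h^{-1}[\E(Y)]$ for all $i$; since $\E(Y)$ is a $\T$-relation and $h^{-1}[\E(Y)]$ is then a $\T$-relation by Lemma \ref{inverse_Trel}, it contains the $\T$-closure $\T\!\left(\bigcup_i h_i \cdot \E(X_i)\right) = \E(X)$, so again $h$ lifts.

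For necessity I would exhibit the test objects. Assuming the source is initial, let $Y$ be the structure with $|Y| := |X|$ and $\E(Y) := \bigcap_i h_i^{-1}[\E(X_i)]$; by Lemma \ref{inverse_Trel} each $h_i^{-1}[\E(X_i)]$ is a $\T$-relation, and intersections of $\T$-relations are $\T$-relations (\ref{deduc_relation}), so $Y$ is a $\T$-model. Each $h_i = h_i \circ \id_{|X|}$ lifts to a morphism $Y \to X_i$ by construction of $\E(Y)$, so initiality forces $\id_{|X|}$ to lift to a morphism $Y \to X$, i.e. $\E(Y) \subseteq \E(X)$, which is the missing inclusion. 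Dually, assuming the sink is final, let $Y$ be the structure with $|Y| := |X|$ and $\E(Y) := \T\!\left(\bigcup_i h_i \cdot \E(X_i)\right)$, which is a $\T$-model since its edge set is a $\T$-relation by definition of the $\T$-closure; each $h_i = \id_{|X|} \circ h_i$ lifts to a morphism $X_i \to Y$ because $h_i \cdot \E(X_i) \subseteq \E(Y)$, so finality forces $\id_{|X|}$ to lift to a morphism $X \to Y$, i.e. $\E(X) \subseteq \E(Y)$, as required. The one step needing care --- and the only place where the hypothesis that $\T$ is \emph{without equality} is used, so that $\T$-models are exactly the $\Pi$-structures whose edge set is a $\T$-relation --- is verifying that these candidate structures $Y$ are genuine $\T$-models; this is supplied precisely by Lemma \ref{inverse_Trel} and the closure properties of $\T$-relations recorded in \ref{deduc_relation}, and everything else is routine unwinding of the definitions.
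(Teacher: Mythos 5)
Your proof is correct and follows essentially the same strategy as the paper's: both nontrivial directions are settled by constructing a test $\T$-model on the underlying set $|X|$ (justified via Lemma \ref{inverse_Trel} and the closure properties of $\T$-relations in \ref{deduc_relation}) and applying initiality, resp.\ finality, to the identity function. The only deviation is cosmetic: in the source direction you use the single global test structure $\E(Y) = \bigcap_{i \in I} h_i^{-1}[\E(X_i)]$, handling all missing edges at once, whereas the paper adjoins one edge $(R, (x_1, \ldots, x_n))$ to $\E(X)$ and takes the $\T$-closure --- both yield the same conclusion.
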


\begin{proof}
For the first assertion, if $\E(X)$ has the stated characterization, then it readily follows that $(h_i)_i$ is initial. So assume that $(h_i)_i$ is initial; since each $h_i$ ($i \in I)$ is a $\Pi$-morphism, we need only prove that $X \models Rx_1\ldots x_n$ whenever $X_i \models Rh_i(x_1)\ldots h_i(x_n)$ for all $i \in I$. Assuming the latter condition, we define a $\Pi$-structure $Y$ by setting $|Y| := |X|$ and letting $\E(Y)$ be the $\T$-closure of $\E(X) \cup \{(R, (x_1, \ldots, x_n))\}$. Then $Y$ is a $\T$-model, and for each $i \in I$ we claim that the function $h_i : |Y| = |X| \to |X_i|$ is a $\Pi$-morphism $h_i : Y \to X_i$, i.e.~that $\E(Y) \subseteq h_i^{-1}[\E(X_i)]$. By definition of $\E(Y)$, it suffices to show that $h_i^{-1}[\E(X_i)]$ is a $\T$-relation on $|Y| = |X|$ that contains $\E(X) \cup \{(R, (x_1, \ldots, x_n))\}$. Since $\E(X_i)$ is a $\T$-relation on $|X_i|$ (because $X_i$ is a $\T$-model), we deduce from \cref{inverse_Trel} that $h_i^{-1}[\E(X_i)]$ is a $\T$-relation on $|Y| = |X|$. Then because $h_i : X \to X_i$ is a $\Pi$-morphism and $X_i \models Rh_i(x_1)\ldots h_i(x_n)$ by hypothesis, it follows that $h_i^{-1}[\E(X_i)]$ contains $\E(X) \cup \{(R, (x_1, \ldots, x_n))\}$. So each $h_i : Y \to X_i$ ($i \in I$) is a $\Pi$-morphism, and the initiality of the source $(h_i)_i$ then implies that the identity function $1_{|X|} : |Y| = |X| \to |X|$ is a $\Pi$-morphism $Y \to X$, which entails that $X \models Rx_1 \ldots x_n$, as desired.       

For the second assertion, suppose first that $\E(X)$ is the $\T$-closure of $\bigcup_{i \in I} h_i \cdot \E(X_i)$. To show that the sink $(h_i)_i$ is final, let $Y$ be a $\T$-model and let $h : |X| \to |Y|$ be a function such that the function $h \circ h_i : |X_i| \to |Y|$ is a $\Pi$-morphism $X_i \to Y$ for each $i \in I$. To show that $h$ is a $\Pi$-morphism $X \to Y$, we must show that $\E(X) \subseteq h^{-1}[\E(Y)]$. By assumption on $\E(X)$, it suffices to show that $h^{-1}[\E(Y)]$ is a $\T$-relation on $|X|$ that contains $\bigcup_{i \in I} h_i \cdot \E(X_i)$. The first claim holds by \cref{inverse_Trel} because $\E(Y)$ is a $\T$-relation on $|Y|$ (since $Y$ is a $\T$-model), and the second claim holds because each $h \circ h_i$ ($i \in I$) is a $\Pi$-morphism $X_i \to Y$. 

Now suppose that the sink $(h_i : X_i \to X)_{i \in I}$ is final, and let us show that $\E(X)$ must be the $\T$-closure of $\bigcup_{i \in I} h_i \cdot \E(X_i)$. That is, we must show that $\E(X)$ is the smallest $\T$-relation on $|X|$ that contains $\bigcup_{i \in I} h_i \cdot \E(X_i)$. That $\E(X)$ is a $\T$-relation holds because $X$ is a $\T$-model, and since each $h_i : X_i \to X$ ($i \in I$) is a $\Pi$-morphism, it follows that $\E(X)$ contains $\bigcup_{i \in I} h_i \cdot \E(X_i)$. Now let $\R$ be any $\T$-relation on $|X|$ that contains $\bigcup_{i \in I} h_i \cdot \E(X_i)$, and let us show that $\E(X) \subseteq \R$. Let $X'$ be the $\Pi$-structure defined by $|X'| := |X|$ and $\E(X') := \R$, so that $X'$ is a $\T$-model because $\R$ is a $\T$-relation. Showing that $\E(X) \subseteq \R$  is equivalent to showing that the identity function $1_{|X|} : |X| \to |X| = |X'|$ is a $\Pi$-morphism $X \to X'$. By finality of the sink $(h_i)_{i \in I}$, it then suffices to show that each function $h_i : |X_i| \to |X| = |X'|$ is a $\Pi$-morphism $X_i \to X'$, which is true by assumption on $\R$. This proves the desired characterization of $\E(X)$.           
\end{proof}

\noindent The following result, whose proof we outline in \cref{without_equality_sinks} below, is a special case of \cite[Prop.~5.1]{Rosickyconcrete}:

\begin{prop}[Rosick\'{y} \cite{Rosickyconcrete}]
\label{without_equality_top}
Let $\T$ be a relational Horn theory without equality. Then the concrete category $\T\Mod$ is topological over $\Set$. 
\end{prop}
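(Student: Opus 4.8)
The plan is to construct initial lifts of structured sources directly, using the characterization of initial sources supplied by the first part of Proposition \ref{initial_final_sinks}. Let $\left(f_i : S \to |X_i|\right)_{i \in I}$ be a structured source in $\T\Mod$. Guided by that characterization, I would define a $\Pi$-structure $X$ with $|X| := S$ by declaring, for each $R \in \Pi$ of arity $n \geq 1$ and all $s_1, \ldots, s_n \in S$, that $X \models Rs_1\ldots s_n$ precisely when $X_i \models Rf_i(s_1)\ldots f_i(s_n)$ for every $i \in I$; equivalently, $\E(X) := \bigcap_{i \in I} f_i^{-1}[\E(X_i)]$, which is a well-defined subset of the fixed set of all $\Pi$-edges in $S$. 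The task is then to verify that $X$ is a $\T$-model, that each $f_i$ lifts to a $\Pi$-morphism $X \to X_i$, and that the resulting source is initial.

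First I would check that $X$ is a $\T$-model, i.e. that $\E(X)$ is a $\T$-relation on $S$. Since each $X_i$ is a $\T$-model, $\E(X_i)$ is a $\T$-relation on $|X_i|$, so by Lemma \ref{inverse_Trel} each $f_i^{-1}[\E(X_i)]$ is a $\T$-relation on $S$; it then remains to note that an arbitrary intersection of $\T$-relations on $S$ is again a $\T$-relation. Indeed, if $\Phi \Longrightarrow \psi$ is an axiom of $\T$ and $\kappa$ is a valuation with $\kappa \cdot \varphi \in \bigcap_{i} f_i^{-1}[\E(X_i)]$ for each $\varphi \in \Phi$, then for each fixed $i$ we have $\kappa \cdot \varphi \in f_i^{-1}[\E(X_i)]$ for all $\varphi$, whence $\kappa \cdot \psi \in f_i^{-1}[\E(X_i)]$, and therefore $\kappa \cdot \psi$ lies in the intersection. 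This is simply the evident extension of the closure-under-small-intersections observation in \ref{deduc_relation} to arbitrary families. Next, each $f_i$ is a $\Pi$-morphism $X \to X_i$, since by construction $\E(X) = \bigcap_{j} f_j^{-1}[\E(X_j)] \subseteq f_i^{-1}[\E(X_i)]$, which is exactly the condition $f_i \cdot \E(X) \subseteq \E(X_i)$. Finally, the source $\left(f_i : X \to X_i\right)_{i \in I}$ is initial by the first part of Proposition \ref{initial_final_sinks}, because $\E(X)$ was defined to satisfy precisely the stated biconditional. Hence every structured source admits an initial lift, so $\T\Mod$ is topological over $\Set$.

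The only genuine subtlety — and the step I would flag as the main point to get right — is the passage to possibly large index classes $I$ permitted in the definition of a topological category: one must confirm that $\bigcap_{i \in I} f_i^{-1}[\E(X_i)]$ is truly a $\T$-relation even when $I$ is a proper class. As the argument above shows, this causes no real difficulty, since the intersection is formed inside the set of all $\Pi$-edges in $S$ and the defining closure property of a $\T$-relation is preserved under intersections of any size; no smallness of $I$ is ever used. Everything else is a routine unwinding of definitions, with the substantive content already isolated in Proposition \ref{initial_final_sinks} and Lemma \ref{inverse_Trel}.
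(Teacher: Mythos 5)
Your proposal is correct and follows essentially the same route as the paper's own proof (outlined in \ref{without_equality_sinks}): construct the initial lift of a structured source $\left(f_i : S \to |X_i|\right)_{i \in I}$ as the $\Pi$-structure $X$ with $|X| := S$ and $\E(X) = \bigcap_{i \in I} f_i^{-1}\left[\E(X_i)\right]$, verify that $X$ is a $\T$-model using Lemma \ref{inverse_Trel} together with closure of $\T$-relations under intersection (which is exactly where the absence of equality is needed), and obtain initiality from the first part of Proposition \ref{initial_final_sinks}. Your explicit treatment of proper-class-indexed sources fills in a detail that the paper's ``it readily follows'' leaves implicit, but it does not change the substance of the argument.
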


\begin{para}
\label{without_equality_sinks}
{\em
The initial lift of a structured source $(h_i : S \to |X_i|)_{i \in I}$ is the source $(h_i : X \to X_i)_{i \in I}$, where $X$ is the $\Pi$-structure with $|X| := S$ and $X \models Rx_1\ldots x_n$ iff $X_i \models Rh_i(x_1) \ldots h_i(x_n)$ for all $i \in I$ (for any $R \in \Pi$ of arity $n \geq 1$ and any $x_1, \ldots, x_n \in |X|$). Since $\T$ is a relational Horn theory \emph{without equality} and each $X_i$ ($i \in I)$ is a $\T$-model, it readily follows that $X$ is a $\T$-model, and the source $(h_i)_i$ in $\T\Mod$ is initial by \cref{initial_final_sinks}. The final lift of a structured sink $(h_i : |X_i| \to S)_{i \in I}$ is the sink $(h_i : X_i \to X)_{i \in I}$, where $X$ is the $\Pi$-structure with $|X| := S$ and $\E(X)$ the $\T$-closure of $\bigcup_{i \in I} h_i \cdot \E(X_i)$. Then $X$ is a $\T$-model because $\E(X)$ is a $\T$-relation, and the sink $(h_i)_i$ is final by \cref{initial_final_sinks}.
}
\end{para} 

\begin{para}
\label{limits}
{\em
Given a small diagram $D : \B \to \T\Mod$, the limit cone of $D$ is the initial lift of the limit cone of $|-| \circ D$ in $\Set$, while the colimit cocone of $D$ is the final lift of the colimit cocone of $|-| \circ D$ in $\Set$ (see e.g. \cite[21.15]{AHS}). In particular, the functor $|-| : \T\Mod \to \Set$ strictly preserves small limits and colimits. If $\T$ is a relational Horn theory \emph{with} equality, then $\T\Mod$ is complete and cocomplete (e.g.~because $\T\Mod$ is locally presentable, by \cite[Proposition 5.30]{LPAC}), and it is still true that $|-| : \T\Mod \to \Set$ strictly preserves limits, since the inclusion $\T\Mod \hookrightarrow \Str(\Pi)$ preserves limits.    
}
\end{para}

\begin{defn}
\label{reflects_relations}
{\em
Let $h : X \to Y$ be a morphism in $\Str(\Pi)$. Then $h$ \textbf{reflects relations} if for each $R \in \Pi$ of arity $n \geq 1$ and any $x_1, \ldots, x_n \in |X|$, we have $X \models Rx_1 \ldots x_n$ if $Y \models Rh(x_1)\ldots h(x_n)$. 
}
\end{defn}

\noindent The characterizations of initial sources and final sinks in $\T\Mod$ provided in \cref{initial_final_sinks} immediately entail that embeddings and quotients in $\T\Mod$ have the following characterizations:

\begin{prop}
\label{embedding_prop}
Let $h : X \to Y$ be a morphism in $\T\Mod$. Then $h$ is an embedding iff $h$ is injective and reflects relations, while $h$ is a quotient morphism iff $h$ is surjective and $\E(Y)$ is the $\T$-closure of $h \cdot \E(X)$.
\end{prop}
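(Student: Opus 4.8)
The plan is to read off both characterizations directly from \ref{initial_final_sinks} by specializing to singleton sources and sinks, using the definitions of embedding and quotient recalled in \ref{fact_system}. Recall that an embedding of a topological category is by definition an injective initial morphism, and a quotient is a surjective final morphism; and since $\T\Mod$ is topological over $\Set$ by \ref{without_equality_top}, these notions apply. Since by \ref{fact_system} the monomorphisms of $\T\Mod$ are exactly the injective morphisms and the epimorphisms are exactly the surjective ones, it remains in each case only to translate the initiality (resp. finality) of the singleton source (resp. sink) $(h : X \to Y)$ into the asserted concrete condition.

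First I would treat the embedding case. Applying the characterization of initial sources in \ref{initial_final_sinks} to the singleton source consisting of $h$ alone, the morphism $h$ is initial iff for every $R \in \Pi$ of arity $n \geq 1$ and all $x_1, \ldots, x_n \in |X|$ we have $X \models Rx_1 \ldots x_n$ iff $Y \models Rh(x_1) \ldots h(x_n)$. The key observation is that the forward implication of this biconditional is automatic: because $h$ is a $\Pi$-morphism, $X \models Rx_1 \ldots x_n$ already entails $Y \models Rh(x_1)\ldots h(x_n)$. Hence the biconditional reduces to its reverse implication, namely that $Y \models Rh(x_1)\ldots h(x_n)$ implies $X \models Rx_1 \ldots x_n$, which is precisely the condition that $h$ reflects relations in the sense of \ref{reflects_relations}. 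Thus $h$ is initial iff $h$ reflects relations, and so $h$ is an embedding iff it is injective and reflects relations.

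For the quotient case I would apply the characterization of final sinks in \ref{initial_final_sinks} to the singleton sink consisting of $h$ alone: this sink is final iff $\E(Y)$ is the $\T$-closure of $h \cdot \E(X)$. Therefore $h$ is a quotient iff it is surjective and $\E(Y)$ is the $\T$-closure of $h \cdot \E(X)$, as claimed. There is no genuine obstacle here; the only point requiring a moment's care is the observation, used in the embedding case, that one direction of the initiality biconditional holds for free because $h$ is a $\Pi$-morphism, so that initiality of $h$ collapses exactly to the reflection-of-relations condition rather than to the full biconditional.
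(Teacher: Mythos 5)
Your proposal is correct and is exactly the argument the paper intends: the paper states \ref{embedding_prop} with no separate proof precisely because it follows by specializing \ref{initial_final_sinks} to singleton sources and sinks, combined with the definitions of embedding and quotient in \ref{fact_system}. Your added observation that the forward implication of the initiality biconditional is automatic (since $h$ is a $\Pi$-morphism), so that initiality collapses to reflection of relations, correctly fills in the one small step the paper leaves implicit.
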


If $\C$ is a topological category over $\Set$, then a bimorphism in $\C$ (i.e.~a morphism that is both epic and monic) is precisely a bijective morphism in view of \cref{fact_system}. As in \cite[Definition 16.1]{AHS}, we say that a full replete\footnote{Recall that a full subcategory $\B \hookrightarrow \C$ is \emph{replete} if whenever $C$ is an object of $\C$ that is isomorphic to an object of $\B$, then $C$ is an object of $\B$.} subcategory $\B \hookrightarrow \C$ is \emph{bireflective} if every object of $\C$ has a $\B$-reflection morphism that is a bimorphism of $\C$, i.e.~that is bijective. Since the concrete category $\Str(\Pi)$ is topological over $\Set$ in view of \cref{without_equality_examples}.\ref{empty} and \cref{without_equality_top}, we now have the following result (cf. \cite[Proposition 3.6]{Monadsrelational}): 

\begin{prop}
\label{without_equality_reflective}
Let $\T$ be a relational Horn theory without equality. Then the full replete subcategory $\T\Mod \hookrightarrow \Str(\Pi)$ is bireflective.
\end{prop}

\begin{proof}
Let $X$ be a $\Pi$-structure. We define a $\Pi$-structure $X^*$ by setting $|X^*| := |X|$ and letting $\E(X^*)$ be the $\T$-closure of $\E(X)$. Then $X^*$ is a $\T$-model, and the identity function $1_{|X|} : |X| \to |X| = |X^*|$ is a bijective $\Pi$-morphism, which we claim is a $\T\Mod$-reflection morphism for $X$. So let $h : X \to Y$ be a $\Pi$-morphism from $X$ to a $\T$-model $Y$. Then the function $h : |X^*| = |X| \to |Y|$ is also a $\Pi$-morphism $h : X^* \to Y$, because $\E(X^*)$ is the smallest $\T$-relation on $|X|$ that contains $\E(X)$, and thus $\E(X^*) \subseteq h^{-1}[\E(Y)]$ because $h^{-1}[\E(Y)]$ is a $\T$-relation on $|X|$ \eqref{inverse_Trel} that contains $\E(X)$ (since $h : X \to Y$ is a $\Pi$-morphism).      
\end{proof}

\section{Extensivity of $\T\Mod$ for a relational Horn theory $\T$}
\label{extensivity_section}

Throughout \S\ref{extensivity_section}, we fix a relational Horn theory $\T$ over a relational signature $\Pi$. 

\begin{defn}
\label{variable_set}
{\em
Let $\Var$ be a set of variables. For any $(\Pi \cup \{=\})$-edge $\varphi$ in $\Var$, we define the set of variables $\Var(\varphi)$ occurring in $\varphi$ as follows: if $\varphi = (R, (v_1, \ldots, v_n))$ for some $R \in \Pi \cup \{=\}$ of arity $n \geq 1$ and some $v_1, \ldots, v_n \in \Var$, then $\Var(\varphi) := \{v_1, \ldots, v_n\}$. If $\Phi$ is a set of $(\Pi \cup \{=\})$-edges in $\Var$, then we define $\Var(\Phi) := \bigcup_{\varphi \in \Phi} \Var(\varphi)$.
}
\end{defn}

\begin{condition}
\label{variable_para}
{\em
We suppose throughout \S\ref{extensivity_section} that $\T$ satisfies the following mild syntactic condition. For each axiom $\Phi \Longrightarrow \psi$ of $\T$, we require that:
\begin{enumerate}[leftmargin=*]
\item Any two distinct elements of $\Phi$ share at least one variable in common; i.e.~if $\varphi, \varphi' \in \Phi$ are distinct, then $\Var(\varphi) \cap \Var(\varphi') \neq \varnothing$.\label{variable_1}
\item If $\Phi \neq \varnothing$, then $\Var(\psi) \subseteq \Var(\Phi)$; and if $\Phi = \varnothing$, then $\Var(\psi)$ is a singleton.\label{variable_2} 
\end{enumerate}
This condition is certainly satisfied by all of our central examples \eqref{without_equality_examples} (and by the additional examples of \cite[Example 3.5]{Monadsrelational}), and will be used to conveniently characterize small coproducts in $\T\Mod$ in \cref{without_equality_coprod} below. We shall provide some commentary on this condition in \cref{not_nec} and \cref{finally_dense} below.
} 
\end{condition}  

\begin{para}
\label{without_equality_pb}
{\em
In view of \cref{limits}, the pullback $A \times_{f, g} B$ in $\T\Mod$ of two $\T$-model morphisms $f : A \to C$ and $g : B \to C$ is formed by taking the initial lift of the pullback of the underlying functions $f : |A| \to |C|$ and $g : |B| \to |C|$ in $\Set$. So we have \[ \left|A \times_{f, g} B\right| = |A| \times_{f, g} |B| = \{(a, b) \in |A| \times |B| \mid f(a) = g(b)\} \] with
\[ R^{A \times_{f, g} B} = \left\{((a_1, b_1), \ldots, (a_n, b_n)) \in \left|A \times_{f, g} B\right|^n \mid A \models R a_1 \ldots a_n \text{ and } B \models R b_1 \ldots b_n\right\} \] for each $R \in \Pi$ of arity $n \geq 1$. The pullback projections $\pi_A : |A| \times_{f, g} |B| \to |A|$ and $\pi_B : |A| \times_{f, g} |B| \to |B|$ in $\Set$ then lift to pullback projections in $\T\Mod$.
}  
\end{para}

\begin{para}
\label{without_equality_coprod}
{\em
Let $\T$ be a relational Horn theory \emph{without equality}. In view of \cref{limits}, the coproduct $\coprod_{i \in I} X_i$ of a small family of $\T$-models $X_i$ ($i \in I$) is formed by taking the final lift of the coproduct of the underlying sets $|X_i|$ ($i \in I$) in $\Set$. So we have $\left|\coprod_i X_i\right| = \coprod_i |X_i|$, and $\E\left(\coprod_i X_i\right)$ is the $\T$-closure of the set of all $\Pi$-edges in the images of all coproduct injections $s_i : |X_i| \to \coprod_i |X_i|$ ($i \in I$). We now claim that in fact
\[ \E\left(\coprod_i X_i\right) = \bigcup_{i \in I} s_i \cdot \E(X_i), \] for which it suffices to show that $\bigcup_{i \in I} s_i \cdot \E(X_i)$ is a $\T$-relation on $\coprod_i |X_i|$. 

So let $\Phi \Longrightarrow \psi$ be an axiom of $\T$, let $\kappa : \Var \to \coprod_i |X_i|$ be a valuation, and suppose that $\kappa \cdot \varphi \in \bigcup_{i \in I} s_i \cdot \E(X_i)$ for each $\varphi \in \Phi$; we must show that $\kappa \cdot \psi \in \bigcup_{i \in I} s_i \cdot \E(X_i)$. Since any two distinct elements of $\Phi$ share at least one variable in common (see \cref{variable_para}.\ref{variable_1}) and the union $\bigcup_{i \in I} s_i \cdot \E(X_i)$ is disjoint (since the coproduct injections $s_i$ ($i \in I$) have disjoint images), there must be some $i \in I$ such that $\kappa \cdot \varphi \in s_i \cdot \E(X_i)$ for all $\varphi \in \Phi$. In view of \cref{variable_para}.\ref{variable_2}, we may then assume without loss of generality that $\kappa$ factors through $s_i : |X_i| \to \coprod_i |X_i|$ via a valuation $\kappa' : \Var \to |X_i|$, so that $\kappa = s_i \circ \kappa'$ and hence $s_i \cdot \kappa' \cdot \varphi \in s_i \cdot \E(X_i)$ for each $\varphi \in \Phi$. Since $s_i$ is injective, it follows that $\kappa' \cdot \varphi \in \E(X_i)$ for each $\varphi \in \Phi$, so that $\kappa' \cdot \psi \in \E(X_i)$ because $X_i$ is a $\T$-model. It then follows that $\kappa \cdot \psi \in \bigcup_{i \in I} s_i \cdot \E(X_i)$, as desired. 

Now let $\T$ be an arbitrary relational Horn theory (possibly with equality), and let $\T^-$ be the relational Horn theory \emph{without equality} obtained from $\T$ by removing all axioms $\Phi \Longrightarrow \psi$ of $\T$ that contain equality, i.e.~where $\psi$ is a $\{=\}$-edge in $\Var$. We claim that the inclusion $\T\Mod \hookrightarrow \T^-\Mod$ preserves small coproducts. So let $(X_i)_{i \in I}$ be a small family of $\T$-models, and let us show that the coproduct $\coprod_i X_i$ in $\T^-\Mod$ is a $\T$-model. Since $\coprod_i X_i$ is a $\T^-$-model, it just remains to show that $\coprod_i X_i$ satisfies each axiom $\Phi \Longrightarrow \psi$ of $\T$ where $\psi$ is a $\{=\}$-edge $x = y$. So let $\kappa : \Var \to \coprod_i |X_i|$ be a valuation such that $\kappa \cdot \varphi \in \E\left(\coprod_i X_i\right) = \bigcup_i s_i \cdot \E(X_i)$ for each $\varphi \in \Phi$, and let us show that $\kappa(x) = \kappa(y)$. Since $\T$ satisfies \cref{variable_para}, we deduce exactly as in the previous paragraph that there must be some $i \in I$ such that $\kappa$ factors through a valuation $\kappa' : \Var \to |X_i|$ (so that $s_i \circ \kappa' = \kappa$) and $\kappa' \cdot \varphi \in \E(X_i)$ for all $\varphi \in \Phi$. Because $X_i$ is a $\T$-model, we then deduce that $\kappa'(x) = \kappa'(y)$, so that $\kappa(x) = \kappa(y)$ as desired. Since $|-| : \T^-\Mod \to \Set$ preserves small coproducts, it follows that $|-| : \T\Mod \to \Set$ preserves small coproducts. 

Let $\T$ be a relational Horn theory and let $(X_i)_{i \in I}$ be a small family of $\T$-models. For each $R \in \Pi$ of arity $n \geq 1$ and any $(i_1, x_1), \ldots, (i_n, x_n) \in \coprod_i |X_i|$, we thus have $\coprod_i X_i \models R(i_1, x_1)\ldots(i_n, x_n)$ iff $i_1 = \ldots = i_n = i$ and $X_i \models Rx_1\ldots x_n$. Therefore, each coproduct injection $s_i : X_i \to \coprod_i X_i$ ($i \in I$) is an embedding\footnote{Even if $\T$ is a relational Horn theory \emph{with} equality, a morphism of $\T\Mod$ that is injective and reflects relations is still an embedding.} \eqref{embedding_prop}. The initial object of $\T\Mod$ is the empty set equipped (of course) with the empty set of $\Pi$-edges. 
}
\end{para}

We recall from \cite{Carboniextensive} that a category $\C$ is said to be \emph{infinitely extensive} if it has small coproducts and for any small family $(X_i)_{i \in I}$ of objects of $\C$, the canonical functor $\prod_{i \in I} \C/X_i \to \C/\left(\coprod_i X_i\right)$ is an equivalence. If $\C$ has small coproducts and pullbacks, recall that small coproducts in $\C$ are said to be \emph{universal} (or \emph{stable under pullback}) if for any small family $(X_i)_{i \in I}$ of objects of $\C$ with coproduct $\left(s_i : X_i \to \coprod_i X_i\right)_{i \in I}$ and any morphism $f : Y \to \coprod_i X_i$, if the following diagram is a pullback in $\C$ for each $i \in I$: 
\[\begin{tikzcd}
	{P_i} && Y \\
	\\
	{X_i} && {\coprod_i X_i},
	\arrow["{t_i}", from=1-1, to=1-3]
	\arrow["f", from=1-3, to=3-3]
	\arrow["{\pi_i}"', from=1-1, to=3-1]
	\arrow["{s_i}"', from=3-1, to=3-3]
\end{tikzcd}\]
then $(t_i : P_i \to Y)_{i \in I}$ is a coproduct diagram in $\C$. If $\C$ has small coproducts and pullbacks, then a coproduct $\left(s_i : X_i \to \coprod_i X_i\right)_{i \in I}$ of a small family $(X_i)_{i \in I}$ of objects of $\C$ is said to be \emph{disjoint} if for any distinct indices $i, j \in I$, the pullback of $s_i$ along $s_j$ is isomorphic to the initial object of $\C$. If $\C$ has small coproducts and pullbacks, then by (the infinitary version of) \cite[Proposition 2.14]{Carboniextensive}, we have that $\C$ is infinitely extensive iff small coproducts in $\C$ are universal and disjoint. 

\begin{theo}
\label{extensive_thm}
Let $\T$ be a relational Horn theory that satisfies \cref{variable_para}. Then $\T\Mod$ is infinitely extensive.
\end{theo}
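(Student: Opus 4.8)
The plan is to invoke the criterion recalled just before the statement (the infinitary version of \cite[Proposition 2.14]{Carboniextensive}): since $\T\Mod$ has small coproducts \eqref{without_equality_coprod} and pullbacks \eqref{without_equality_pb}, it suffices to verify that small coproducts in $\T\Mod$ are both \emph{disjoint} and \emph{universal}. I would treat these two properties in turn, relying throughout on the explicit descriptions of coproducts and pullbacks in \eqref{without_equality_coprod} and \eqref{without_equality_pb}, together with the characterization of final sinks in \ref{initial_final_sinks}.

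Disjointness is the easy half. For distinct $i, j \in I$, I would form the pullback of the coproduct insertions $s_i : X_i \to \coprod_k X_k$ and $s_j : X_j \to \coprod_k X_k$. By \eqref{without_equality_pb} its underlying set is $\{(x, x') \in |X_i| \times |X_j| \mid s_i(x) = s_j(x')\}$, which is empty because the coproduct insertions have pairwise disjoint images. The unique $\Pi$-structure on the empty set is the initial object of $\T\Mod$ \eqref{without_equality_coprod}, so this pullback is initial, giving disjointness.

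Universality carries the real content. Fix a small family $(X_i)_{i \in I}$ with coproduct $\coprod_i X_i$ and a morphism $f : Y \to \coprod_i X_i$, and form the pullbacks $P_i := Y \times_{f, s_i} X_i$ with projections $t_i : P_i \to Y$. I want to show $(t_i : P_i \to Y)_{i \in I}$ is a coproduct diagram. Since $|-| : \T\Mod \to \Set$ preserves small limits and colimits \eqref{without_equality_sinks} and coproducts in $\Set$ are universal, the underlying cocone $(|t_i| : |P_i| \to |Y|)_{i \in I}$ is a coproduct in $\Set$; concretely $|t_i|$ identifies $|P_i|$ with $f^{-1}(s_i(|X_i|))$, and these preimages partition $|Y|$. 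Because a coproduct in $\T\Mod$ is precisely the final lift of the underlying set-coproduct \eqref{without_equality_sinks}, it remains only to show that the sink $(t_i : P_i \to Y)_{i \in I}$ is \emph{final}; by \ref{initial_final_sinks} this amounts to the identity $\E(Y) = \T\left(\bigcup_{i \in I} t_i \cdot \E(P_i)\right)$.

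For this identity the inclusion $\bigcup_i t_i \cdot \E(P_i) \subseteq \E(Y)$ is immediate since each $t_i$ is a $\Pi$-morphism, and $\E(Y)$ is already a $\T$-relation. The reverse inclusion $\E(Y) \subseteq \bigcup_i t_i \cdot \E(P_i)$ is the crux, and I expect it to be the main obstacle. Given $Y \models R y_1 \ldots y_n$, I would use that $f$ is a $\Pi$-morphism to obtain $\coprod_i X_i \models R f(y_1) \ldots f(y_n)$, and then invoke the explicit edge formula of \eqref{without_equality_coprod}: an $R$-edge holds in $\coprod_i X_i$ only if all of its entries lie in a single component $s_i(|X_i|)$ with the corresponding edge holding in $X_i$. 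This forces all $f(y_k)$ into one component $s_i(|X_i|)$ with $X_i \models R x_1 \ldots x_n$, whence $((y_1, x_1), \ldots, (y_n, x_n)) \in R^{P_i}$ by the pullback description \eqref{without_equality_pb}, and applying $t_i$ recovers the original edge. The success of this step hinges entirely on the clean coproduct-edge formula \eqref{without_equality_coprod}, which is exactly where the syntactic condition \eqref{variable_para} is doing its work. Having established both disjointness and universality, infinite extensivity of $\T\Mod$ follows from the cited criterion.
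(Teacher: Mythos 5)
Your proposal is correct and takes essentially the same approach as the paper: the same reduction to universality and disjointness of coproducts via the criterion of \cite[Proposition 2.14]{Carboniextensive}, the same disjointness argument, and the same key computation tracing an edge $Y \models R y_1 \ldots y_n$ through $f$ into a single summand via the explicit coproduct formula of \ref{without_equality_coprod} (where \eqref{variable_para} enters) and lifting it to $P_i$ via \ref{without_equality_pb}. The only difference is packaging: where the paper checks the universal property of $(t_i : P_i \to Y)_{i \in I}$ directly against an arbitrary cocone $(h_i : P_i \to Z)_{i \in I}$, you verify finality of the sink via \ref{initial_final_sinks}, and your computation in fact yields the slightly stronger identity $\E(Y) = \bigcup_{i \in I} t_i \cdot \E(P_i)$, so no $\T$-closure is needed.
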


\begin{proof}
Since $\T\Mod$ has small coproducts and pullbacks \eqref{limits}, it is equivalent to show that small coproducts in $\T\Mod$ are universal and disjoint. For the first claim, let $(X_i)_{i \in I}$ be a small family of $\T$-models with coproduct $(s_i : X_i \to \coprod_i X_i)_{i \in I}$ in $\T\Mod$. Let $f : Y \to \coprod_i X_i$ be a morphism of $\T\Mod$, and for each $i \in I$ suppose that the following diagram is a pullback in $\T\Mod$:
\[\begin{tikzcd}
	{P_i} && Y \\
	\\
	{X_i} && {\coprod_i X_i}.
	\arrow["{t_i}", from=1-1, to=1-3]
	\arrow["f", from=1-3, to=3-3]
	\arrow["{\pi_i}"', from=1-1, to=3-1]
	\arrow["{s_i}"', from=3-1, to=3-3]
\end{tikzcd}\] 
We must show that $(t_i : P_i \to Y)_{i \in I}$ is a coproduct diagram in $\T\Mod$. Because $\Set$ is infinitely extensive and $|-| : \T\Mod \to \Set$ preserves small coproducts and pullbacks (see \cref{limits} and \cref{without_equality_coprod}), we deduce that $(t_i : |P_i| \to |Y|)_{i \in I}$ is a coproduct diagram in $\Set$. Now let $(h_i : P_i \to Z)_{i \in I}$ be a small family of $\T$-model morphisms. Then there is a unique function $h : |Y| \to |Z|$ satisfying $h \circ t_i = h_i$ for each $i \in I$, so we just have to show that $h$ is a $\Pi$-morphism $Y \to Z$. So let $R \in \Pi$ of arity $n \geq 1$ and suppose that $Y \models R y_1 \ldots y_n$. Since $f : Y \to \coprod_i X_i$ is a $\Pi$-morphism, we obtain $\coprod_i X_i \models Rf(y_1) \ldots f(y_n)$. Then by \cref{without_equality_coprod}, we deduce that there are some $i \in I$ and some $x_1, \ldots, x_n \in |X_i|$ such that $s_i(x_k) = f(y_k)$ for each $1 \leq k \leq n$ and $X_i \models Rx_1 \ldots x_n$. For each $1 \leq k \leq n$ we then have $(x_k, y_k) \in |P_i|$, and we have $P_i \models R(x_1, y_1)\ldots(x_n, y_n)$ because $X_i \models Rx_1 \ldots x_n$ and $Y \models R y_1 \ldots y_n$ (see ~\ref{without_equality_pb}). Since $h_i : P_i \to Z$ is a $\Pi$-morphism, we then obtain $Z \models R h_i(x_1, y_1)\ldots h_i(x_n, y_n)$, i.e.~$Z \models R h(t_i(x_1, y_1))\ldots h(t_i(x_n, y_n))$, i.e.~$Z \models R h(y_1)\ldots h(y_n)$, as desired. This proves that small coproducts are universal in $\T\Mod$.

It remains to show that the coproduct $(s_i : X_i \to \coprod_i X_i)_{i \in I}$ is disjoint. So let $i, j \in I$ be distinct. Since small coproducts are disjoint in $\Set$ and $|-| : \T\Mod \to \Set$ preserves pullbacks and small coproducts, we deduce that the underlying set of the pullback of $s_i$ along $s_j$ in $\T\Mod$ is $\varnothing$. But there is a unique $\T$-model with underlying set $\varnothing$, and this is the initial object of $\T\Mod$ \eqref{without_equality_coprod}.             
\end{proof}

\noindent Recall from \cite{Carboniextensive} that a category $\C$ is said to be \emph{infinitely distributive} if it has finite products and small coproducts and for each $X \in \mathop{\mathsf{ob}}\C$, the functor $X \times (-) : \C \to \C$ preserves small coproducts. The following result now follows immediately from \cref{extensive_thm} and (the infinitary version of) \cite[Proposition 4.5]{Carboniextensive}:

\begin{cor}
\label{distributive_cor}
Let $\T$ be a relational Horn theory that satisfies \cref{variable_para}. Then $\T\Mod$ is infinitely distributive.
\end{cor}

\begin{egg}
\label{not_nec}
{\em
Let $\T$ be a relational Horn theory. \cref{extensive_thm} shows that the satisfaction of \cref{variable_para} is sufficient for infinite extensivity of $\T\Mod$, but it is not \emph{necessary}, as the following (rather trivial) examples show. 
\begin{enumerate}[leftmargin=*]
\item Let $\Pi$ be any relational signature that contains at least one relation symbol of arity $\geq 2$, and suppose that $\T$ consists of just the axioms $\Longrightarrow Rv_1\ldots v_n$ for each $R \in \Pi$ of arity $n \geq 1$, where $v_1, \ldots, v_n$ are pairwise distinct variables. Then $\T$ clearly violates \cref{variable_para}.\ref{variable_2}. A $\Pi$-structure $X$ is a $\T$-model iff $R^X = |X|^n$ for each $R \in \Pi$ of arity $n \geq 1$, and hence there is a unique $\T$-model structure on any set (the \emph{indiscrete} $\Pi$-structure), and moreover $\T\Mod(X, Y) = \Set(|X|, |Y|)$ for any $\T$-models $X, Y$. It follows that the functor $|-| : \T\Mod \to \Set$ is an isomorphism, so that $\T\Mod$ is infinitely extensive because $\Set$ is. 

\item As another (trivial) example, let $\Pi$ be the empty relational signature, and suppose that $\T$ consists of just the axiom $\Longrightarrow x = y$ for distinct variables $x, y$. Then $\T$ again violates \cref{variable_para}.\ref{variable_2}, but clearly $\T\Mod$ is equivalent to the terminal category, which is (trivially) infinitely extensive. 
\end{enumerate}
We are not aware of any ``natural'' or well-studied examples of relational Horn theories that fail to satisfy \cref{variable_para}, and we also do not know whether there is an alternative syntactic condition that is both sufficient \emph{and} necessary for infinite extensivity of $\T\Mod$. 
}
\end{egg}

\begin{rmk}
\label{finally_dense}
{\em
Let $\T$ be a relational Horn theory \emph{without equality}, and let $R \in \Pi$ be a relation symbol of arity $n \geq 1$. We define a $\T$-model $R_\T$ by setting $\left|R_\T\right| := \{1, \ldots, n\}$ and letting $\E\left(R_\T\right)$ be the $\T$-closure of the set consisting of the single $\Pi$-edge $(R, (1, \ldots, n))$ on $\{1, \ldots, n\}$; in other words, $R_\T$ is the free $\T$-model on the $\Pi$-structure with underlying set $\{1, \ldots, n\}$ and the unique $\Pi$-edge $(R, (1, \ldots, n))$ (see the proof of \cref{without_equality_reflective}). For any $\T$-model $X$, it follows that $\Pi$-morphisms $R_\T \to X$ are in bijective correspondence with $\Pi$-edges in $\E(X)$ whose first component is $R$. The small full subcategory $\Pi_\T$ of $\T\Mod$ consisting of the $\T$-models $R_\T$ ($R \in \Pi$) is \emph{finally dense} in $\T\Mod$ (see \cite[Definition 10.69]{AHS}), which means that for any $\T$-model $X$, there is a final sink $(h_i : X_i \to X)_{i \in I}$ with codomain $X$ and $X_i \in \Pi_\T$ for each $i \in I$. Specifically, one takes $I := \E(X)$, and for $e = (R, (x_1, \ldots, x_n)) \in \E(X)$, one takes $X_e := R_\T$ and $h_e : R_\T \to X$ to be the $\Pi$-morphism corresponding to the $\Pi$-edge $e$. Then the resulting sink $(h_e : X_e \to X)_{e \in \E(X)}$ is final by \cref{initial_final_sinks}, because $\E(X)$ is clearly the $\T$-closure of $\bigcup_{e \in \E(X)} h_e \cdot \E(X_e)$.

Now if $\T$ satisfies \cref{variable_para}, then by \cref{without_equality_coprod} it readily follows that for each $R \in \Pi$, the $\T$-model $R_\T$ is a \emph{connected} object of $\T\Mod$, meaning that the representable functor $\T\Mod(R_\T, -) : \T\Mod \to \Set$ preserves small coproducts. We conclude that if $\T$ is a relational Horn theory without equality that satisfies \cref{variable_para}, then $\T\Mod$ has a small finally dense subcategory of connected objects. Given that $\T\Mod$ is topological over $\Set$ \eqref{without_equality_top} and infinitely extensive \eqref{extensive_thm}, one may now wonder about the following conjecture: 
\begin{conjecture}
Let $\C$ be a topological category over $\Set$ that has a finally dense subcategory of connected objects. Then $\C$ is infinitely extensive.
\end{conjecture}
\noindent We thank Rory Lucyshyn-Wright for discussions that led to the posing of this conjecture.    
}
\end{rmk}

\section{Appendix}
\label{appendix}

In this Appendix, we prove the claims asserted at the end of \cref{without_equality_examples}.\ref{met}. So let \linebreak $(\V, \leq, \tensor, \sfk)$ be a commutative unital quantale, and let $\T := \T_{\V\Cat}$. We first show that $\T\Mod$ is concretely isomorphic to $\V\Cat$. We first define a concrete functor $F : \T\Mod \to \V\Cat$, i.e.~a functor that commutes with the faithful functors to $\Set$. So let $X$ be a $\T$-model. We define a $\V$-category $FX = (|X|, d_X)$ by setting $d_X(x, y) := \bigvee \{v \in \V \mid X \models x \sim_v y\}$ for any $x, y \in |X|$. For each $x \in |X|$ we have $d_X(x, x) \geq \sfk$ because $X \models x \sim_\sfk x$. Now let $x, y, z \in |X|$, and let us show that $d(x, z) \geq d(x, y) \tensor d(y, z)$, i.e.~that
\begin{align*} 
\bigvee\{v \in \V \mid X \models x \sim_v z\}	&\geq \bigvee\{v' \in \V \mid X \models x \sim_{v'} y\} \tensor \bigvee\{v'' \in \V \mid X \models y \sim_{v''} z\} \\ 
&= \bigvee\{v' \tensor v'' \mid v', v'' \in \V, X \models x \sim_{v'} y \text{ and } X \models y \sim_{v''} z\},
\end{align*}
where the equality holds because $\tensor$ preserves arbitrary suprema in each variable separately. For any $v', v'' \in \V$ such that $X \models x \sim_{v'} y$ and $X \models y \sim_{v''} z$, we have $X \models x \sim_{v' \tensor v''} z$ and hence $\bigvee\{v \in \V \mid X \models x \sim_v z\} \geq v' \tensor v''$, which yields the desired inequality. This proves that $FX = (|X|, d_X)$ is a well-defined $\V$-category. If $h : X \to Y$ is a morphism of $\T$-models, then the function $h : |X| \to |Y|$ is a $\V$-functor $h : (|X|, d_X) \to (|Y|, d_Y)$ because for any $x, y \in |X|$ we have \[ d_Y(h(x), h(y)) = \bigvee\{v \in \V \mid Y \models h(x) \sim_v h(y)\} \geq \bigvee\{v \in \V \mid X \models x \sim_v y\} = d_X(x, y), \] because $X \models x \sim_v y$ implies $Y \models h(x) \sim_v h(y)$. So we set $F(h) := h$, and $F : \T\Mod \to \V\Cat$ is then clearly functorial and commutes with the faithful functors to $\Set$.

We now define a functor $G : \V\Cat \to \T\Mod$. So let $(X, d)$ be a $\V$-category. We define a $\Pi_\V$-structure $G(X, d)$ by setting $\left|G(X, d)\right| := X$ and, for any $v \in \V$ and $x, y \in X$, by setting $G(X, d) \models x \sim_v y$ iff $d(x, y) \geq v$. It is essentially immediate that $G(X, d)$ is a $\T$-model. Now let $h : (X, d_X) \to (Y, d_Y)$ be a $\V$-functor. Then the function $h : X \to Y$ is a $\Pi_\V$-morphism $h : G(X, d_X) \to G(Y, d_Y)$, because for any $v \in \V$ and $x, y \in X$ we have the implications
\[ G(X, d_X) \models x \sim_v y \ \Longleftrightarrow \ d_X(x, y) \geq v \ \Longrightarrow \ d_Y(h(x), h(y)) \geq v \ \Longleftrightarrow \ G(Y, d_Y) \models h(x) \sim_v h(y). \] So we set $G(h) : = h$, and $G : \V\Cat \to \T\Mod$ is then clearly functorial. 

We now show that $F$ and $G$ are mutually inverse on objects, which will complete the proof. First let $X$ be a $\T$-model, and let us show that $X = GFX = G(|X|, d_X)$, i.e.~that for any $v \in \V$ and $x, y \in |X|$ we have $X \models x \sim_v y$ iff $d_X(x, y) = \bigvee\{v' \in \V \mid X \models x \sim_{v'} y\} \geq v$. The forward implication is immediate. Now suppose that $w := \bigvee\{v' \in \V \mid X \models x \sim_{v'} y\} \geq v$, and let us show that $X \models x \sim_v y$. Since $X$ is a $\T$-model, we have $X \models x \sim_w y$. Then since $w \geq v$ (and $X$ is a $\T$-model), we obtain $X \models x \sim_v y$, as desired. 

Now let $(X, d)$ be a $\V$-category, and let us show that $(X, d) = FG(X, d) = \left(X, d_{G(X, d)}\right)$. So for all $x, y \in X$, we must show that $d(x, y) = d_{G(X, d)}(x, y)$, i.e.~that \[ d(x, y) = \bigvee\{v \in \V \mid G(X, d) \models x \sim_v y\} = \bigvee\{v \in \V \mid d(x, y) \geq v\}, \] which is immediate. This completes the proof that $\T\Mod = \T_{\V\Cat}\Mod$ is concretely isomorphic to $\V\Cat$. It is clear that if $X$ is a model of $\T_{\V\Cat}$, then $X$ is a model of $\T_{\PMet_\V}$ iff the associated $\V$-category $FX$ is a pseudo-$\V$-metric space, whence we obtain the further concrete isomorphism $\T_{\PMet_\V}\Mod \cong \PMet_\V$. Finally, it is easy to verify that if $X$ is a model of $\T_{\PMet_\V}$, then $X$ is a model of $\T_{\Met_\V}$ iff the associated pseudo-$\V$-metric space $FX$ is a $\V$-metric space, whence we obtain the concrete isomorphism $\T_{\Met_\V}\Mod \cong \Met_\V$.

\bibliographystyle{amsplain}
\bibliography{mybib}

\providecommand{\bysame}{\leavevmode\hbox to3em{\hrulefill}\thinspace}
\providecommand{\MR}{\relax\ifhmode\unskip\space\fi MR }
\providecommand{\MRhref}[2]{%
  \href{http://www.ams.org/mathscinet-getitem?mr=#1}{#2}
}
\providecommand{\href}[2]{#2}
\begin{thebibliography}{10}

\bibitem{AHS}
Ji\v{r}\'{\i} Ad\'{a}mek, Horst Herrlich, and George~E. Strecker,
  \emph{Abstract and concrete categories: the joy of cats}, Repr. Theory Appl.
  Categ. (2006), no.~17, 1--507, Reprint of the 1990 original [Wiley, New
  York].

\bibitem{LPAC}
Ji\v{r}\'{\i} Ad\'{a}mek and Ji\v{r}\'{\i} Rosick\'{y}, \emph{Locally
  presentable and accessible categories}, London Mathematical Society Lecture
  Note Series, vol. 189, Cambridge University Press, Cambridge, 1994.

\bibitem{Carboniextensive}
Aurelio Carboni, Stephen Lack, and R.~F.~C. Walters, \emph{Introduction to
  extensive and distributive categories}, J. Pure Appl. Algebra \textbf{84}
  (1993), no.~2, 145--158.

\bibitem{Clementinoextensive}
Maria~Manuel Clementino, \emph{$({T}, {V})$-{C}at is extensive}, Theory Appl.
  Categ. \textbf{36} (2021), 368--378.

\bibitem{Monadsrelational}
Chase Ford, Stefan Milius, and Lutz Schr\"{o}der, \emph{{Monads on Categories
  of Relational Structures}}, 9th Conference on Algebra and Coalgebra in
  Computer Science (CALCO 2021), Leibniz International Proceedings in
  Informatics (LIPIcs), vol. 211, 2021, pp.~14:1--14:17.

\bibitem{Probmetenriched}
Dirk Hofmann and C.~D. Reis, \emph{Probabilistic metric spaces as enriched
  categories}, Fuzzy Sets and Systems \textbf{210} (2013), 1--21.

\bibitem{Monoidaltop}
Dirk Hofmann, Gavin~J. Seal, and Walter Tholen (eds.), \emph{Monoidal
  topology}, Encyclopedia of Mathematics and its Applications, vol. 153,
  Cambridge University Press, Cambridge, 2014.

\bibitem{Kelly}
G.~M. Kelly, \emph{Basic concepts of enriched category theory}, Repr. Theory
  Appl. Categ. (2005), no.~10, Reprint of the 1982 original [Cambridge Univ.
  Press, Cambridge].

\bibitem{Lawveremetric}
F.~William Lawvere, \emph{Metric spaces, generalized logic, and closed
  categories}, Rend. Sem. Mat. Fis. Milano \textbf{43} (1973), 135--166 (1974).

\bibitem{Universalitycoproducts}
Mojgan Mahmoudi, Christoph Schubert, and Walter Tholen, \emph{Universality of
  coproducts in categories of lax algebras}, Appl. Categ. Structures
  \textbf{14} (2006), no.~3, 243--249.

\bibitem{Quantales}
Kimmo~I. Rosenthal, \emph{Quantales and their applications}, Pitman Research
  Notes in Mathematics Series, vol. 234, Longman Scientific \& Technical,
  Harlow, 1990.

\bibitem{Rosickyconcrete}
Ji\v{r}\'{\i} Rosick\'{y}, \emph{Concrete categories and infinitary languages},
  J. Pure Appl. Algebra \textbf{22} (1981), no.~3, 309--339.

\bibitem{Probmetbook}
B.~Schweizer and A.~Sklar, \emph{Probabilistic metric spaces}, North-Holland
  Series in Probability and Applied Mathematics, North-Holland Publishing Co.,
  New York, 1983.

\bibitem{Metagories}
Walter Tholen and Jiyu Wang, \emph{Metagories}, Topology Appl. \textbf{273}
  (2020), 106965, 24.

\end{thebibliography}

\end{document}